\providecommand{\U}[1]{\protect\rule{.1in}{.1in}}
\numberwithin{equation}{section}
\newtheorem{theorem}{Theorem}[section]
\newtheorem{corollary}[theorem]{Corollary}
\begin{document}
\title[Regularity criteria for the 3D Navier-Stokes equations ]{Regularity criteria for strong solutions to the 3D Navier-Stokes equations}
\subjclass[2010]{ 35Q30, 35D35, 35B44 }
\keywords{Navier-Stokes equations. Strong solutions. Blow-up.}

\begin{abstract}
In this paper, we study the regularity problem of the 3D incompressible
Navier--Stokes equations. We prove that the strong solution exists globally
for new regularity criteria. For negligible forces, we give an improvement of
the known time interval of regularity obtained in \cite{9}.

\end{abstract}
\maketitle

\section{Introduction}

Two of the profound open problems in the theory of three dimensional viscous
flows are the unique solvability theorem for all time and the regularity of
solutions. For the three-dimensional Navier-Stokes system weak solutions are
known to exist by a basic result by Leray from 1934 \cite{10}, but the
uniqueness is still open problem \cite{1}-\cite{3} and \cite{8}. Furthermore,
the strong solutions for the 3D Navier-Stokes equations are unique and can be
shown to exist on a certain finite time interval for small initial data and
small forcing term, but the global regularity for the 3D Navier-Stokes is
still open problems (see \cite{4}-\cite{8}, \cite{12}-\cite{14} and references
therein). In 1933 \cite{9}, Leray showed that in the absence of forcing
($f=0$), all solutions of Navier-Stokes equations are eventually smooth (i.e.
after some $T^{\ast}>0$ depending on the data). Kato and Fujita \cite{7}
showed that a smooth solution to the three-dimensional Navier-Stokes equations
exists for all time if $f$ is small in some sense and $u_{0}$ is small in
$H^{\frac{1}{2}}$.

In this paper, we give a new condition for global existence in time for strong
solution for 3D Navier-Stokes equations with external force. We show that no
singularity can occur in finite time for a large class of forcing term. We
also give an extension of the time interval of regularity to the 3D
Navier-Stokes equations with negligible forces \cite{3, 5, 11, 13}. This
result means that the solution does not blow up at $T^{\ast}$.

\section{Notations and preliminaries}

In this section we introduce notations and the definitions of standard
functional spaces that will be used throughout the paper. We denote by
$H_{per}^{m}\left(  \Omega\right)  $, the Sobolev space of periodic functions.
These spaces are endowed with the inner product%
\[
\left(  u,v\right)  =%
{\textstyle\sum\limits_{\left\vert \beta\right\vert \leq m}}
(D^{\beta}u,D^{\beta}v)_{L^{2}\left(  \Omega\right)  }\text{ and the norm
}\left\Vert u\right\Vert _{m}=%
{\textstyle\sum\limits_{\left\vert \beta\right\vert \leq m}}
(\left\Vert D^{\beta}u\right\Vert _{L^{2}\left(  \Omega\right)  }^{2}%
)^{\frac{1}{2}}.
\]
We define the spaces $V_{m}$ as completions of smooth, divergence-free,
periodic, zero-average functions with respect to the $H_{per}^{m}$ norms.
$V_{m}^{\prime}$ denote the dual space of $V_{m}$.

Let $P$ be the orthogonal projection in $L_{per}^{2}\left(
\mathbb{R}
^{3}\right)  ^{3}$ with the range $V_{0}$.\newline Let $A=-P\triangle$ the
Stokes operator. It is easy to check that $Au=-\triangle u$ for every $u\in
D\left(  A\right)  $. We recall that the operator $A$ is a closed positive
self-adjoint unbounded operator.

The eigenvalues of $A$ are $\left\{  \lambda_{j}\right\}  _{j=1}^{j=\infty}$,
$0$ $<$ $\lambda_{1}\leq\lambda_{2}\leq...$and the corresponding orthonormal
set of eigenfunctions $\left\{  w_{j}\right\}  _{j=1}^{j=\infty}$ is complete
in $V_{0}$%
\begin{equation}
Aw_{j}=\lambda_{j}w_{j},\ \ \ w_{j}\in D(A),\forall j. \label{1}%
\end{equation}
Let us now define the trilinear form $b(.,.,.)$ associated with the inertia
terms%
\begin{equation}
b\left(  u,v,w\right)  =\sum_{i,j=1}^{3}%
{\displaystyle\int\limits_{\Omega}}
u_{i}\frac{\partial v_{j}}{\partial x_{_{i}}}w_{j}dx. \label{2}%
\end{equation}
The continuity property of the trilinear form enables us to define (using
Riesz representation Theorem) a bilinear continuous operator $B\left(
u,v\right)  $; $V_{2}\times V_{2}\rightarrow V_{2}^{\prime}$ will be defined
by
\begin{equation}
\left\langle B\left(  u,v\right)  ,w\right\rangle =b\left(  u,v,w\right)
,\text{ }\forall w\in V_{2}\text{.} \label{3}%
\end{equation}
Recall that for $u$ satisfying $\nabla.u=0$ we have%
\begin{equation}
b\left(  u,u,u\right)  =0\text{ and }b\left(  u,v,w\right)  =-b\left(
u,w,v\right)  \text{.} \label{4}%
\end{equation}
We recall some well known inequalities that we will be using in what
follows.\newline Young's inequality%
\begin{equation}
ab\leq\frac{\sigma}{p}a^{p}+\frac{1}{q\sigma^{\frac{q}{p}}}b^{q}%
,a,b,\sigma>0,\text{ }p>1,\text{ }q=\frac{p}{p-1}. \label{5}%
\end{equation}
Poincar\'{e}'s inequality%
\begin{equation}
\lambda_{1}\left\Vert u\right\Vert ^{2}\leq\Vert A^{\frac{1}{2}}u\Vert
^{2}\text{\ for all }u\in V_{0}\text{,} \label{6}%
\end{equation}
$\lambda_{1}$ is the first eigenvalue of the Stokes operator.

\section{Navier-Stokes equations}

The conventional Navier-Stokes system can be written in the evolution form%
\begin{equation}%
\begin{array}
[c]{c}%
\dfrac{\partial u}{\partial t}-\nu\triangle u+u.\nabla u=f,\text{ }t>0,\\
\text{div }u=0,\text{ in }\Omega\times\left(  0,\infty\right)  \text{ and
}u\left(  x,0\right)  =u_{0},\text{ in }\Omega\text{.}%
\end{array}
\label{7}%
\end{equation}
We recall that a Leray weak solution of the Navier-Stokes equations is a
solution which is bounded and weakly continuous in the space of periodic
divergence-free $L^{2}$ functions, whose gradient is square-integrable in
space and time and which satisfies the energy inequality. The proof of the
following theorem is given in \cite{13}.

\begin{theorem}
Assume that $f\in L^{2}(0,T;V_{1}^{\prime})$ and $u_{0}\in V_{0}$ are given.
Then there exists at least one solution $u$ of $(\ref{7})$\ such that $u\in
L^{2}(0,T;V_{1})\cap L^{\infty}\left(  0,T;V_{0}\right)  ,$ $\forall T>0$.
\end{theorem}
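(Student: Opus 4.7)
\medskip

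\textbf{Proof plan.} The natural approach is the classical Faedo--Galerkin scheme adapted to the eigenbasis $\{w_j\}$ of the Stokes operator $A$ introduced in \eqref{1}. I would let $P_m$ denote the orthogonal projection of $V_0$ onto $\mathrm{span}\{w_1,\dots,w_m\}$ and look for an approximate solution
\[
u_m(t)=\sum_{j=1}^{m}g_{j}^{m}(t)\,w_j
\]
satisfying, for every $j\le m$,
\[
\bigl(\partial_t u_m,w_j\bigr)+\nu\bigl(A^{1/2}u_m,A^{1/2}w_j\bigr)+b(u_m,u_m,w_j)=\langle f,w_j\rangle,\qquad u_m(0)=P_m u_0 .
\]
Since $b$ is trilinear and continuous on the finite-dimensional space, this is a quadratic system of ODEs in the coefficients $g_j^m$; Cauchy--Lipschitz yields a local solution on some $[0,T_m)$.

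\medskip

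The next step is to derive a priori estimates that are uniform in $m$ and extend the solution to $[0,T]$. Testing the Galerkin equation against $u_m$ and using \eqref{4} to kill the trilinear term $b(u_m,u_m,u_m)$, I get
\[
\tfrac{1}{2}\tfrac{d}{dt}\|u_m\|^{2}+\nu\|A^{1/2}u_m\|^{2}=\langle f,u_m\rangle .
\]
Bounding the right-hand side by Young's inequality \eqref{5} with $p=q=2$ and absorbing $\tfrac{\nu}{2}\|A^{1/2}u_m\|^{2}$ on the left, then integrating in time and using $\|P_m u_0\|\le\|u_0\|$, produces
\[
\|u_m\|_{L^\infty(0,T;V_0)}^{2}+\nu\,\|u_m\|_{L^{2}(0,T;V_1)}^{2}\le C\bigl(\|u_0\|^{2}+\tfrac{1}{\nu}\|f\|_{L^{2}(0,T;V_1')}^{2}\bigr).
\]
These bounds preclude finite-time blow-up of the ODE, so $T_m=T$.

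\medskip

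Passing to the limit is the step that requires the most care. The uniform bounds above give, along a subsequence, weak-$\ast$ convergence of $u_m$ in $L^\infty(0,T;V_0)$ and weak convergence in $L^2(0,T;V_1)$. To handle the nonlinear term I need strong convergence in $L^2(0,T;V_0)$, which I obtain via Aubin--Lions: using the equation, $\partial_t u_m$ is uniformly bounded in $L^{4/3}(0,T;V_2')$ (the $4/3$ comes from the standard estimate $\|B(u,u)\|_{V_2'}\lesssim\|u\|\,\|A^{1/2}u\|$ in three dimensions), and the embedding $V_1\hookrightarrow V_0\hookrightarrow V_2'$ is compact. This suffices to pass to the limit in $b(u_m,u_m,w_j)$ for every fixed basis element $w_j$, and density of $\bigcup_m \mathrm{span}\{w_1,\dots,w_m\}$ in $V_1$ yields the weak formulation of \eqref{7}.

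\medskip

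The main technical obstacle, as always in this scheme, is the nonlinear term: weak convergence is not preserved under the product $u_m\otimes u_m$, so the Aubin--Lions compactness argument (controlling $\partial_t u_m$ in a weak negative-order norm) is the genuinely nontrivial ingredient. Everything else---local ODE theory, the energy estimate, and the recovery of the initial condition from $u\in C_w([0,T];V_0)$---is routine.
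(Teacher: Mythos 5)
Your Faedo--Galerkin argument is correct and is essentially the classical proof: the paper itself does not prove this theorem but simply cites Temam \cite{13}, and your scheme (eigenbasis of the Stokes operator, local ODE solvability, the energy estimate using $b(u_m,u_m,u_m)=0$, Aubin--Lions compactness to handle the nonlinear term, and passage to the limit) is precisely the argument given there. The only minor imprecision is the exponent in the time-derivative bound --- measured in $V_2'$ your own estimate $\Vert B(u,u)\Vert_{V_2'}\lesssim \Vert u\Vert\,\Vert A^{1/2}u\Vert$ actually yields $\partial_t u_m$ bounded in $L^{2}(0,T;V_2')$, the exponent $4/3$ being the one associated with the $V_1'$ norm --- but either bound suffices for the compactness step, so this does not affect the proof.
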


For strong solutions, we have the following result \cite{13}.

\begin{theorem}
Assume that $u_{0}\in V_{1}$ and $f\in V_{0}$ are given. then there exists a
$T>0$ depending on $\left\Vert u_{0}\right\Vert _{1}$, $\nu$ and $\left\Vert
f\right\Vert $, such that there exists a unique strong solution $u\in
L^{\infty}(0,T;V_{1})\cap L^{2}\left(  0,T;V_{2}\right)  $.
\end{theorem}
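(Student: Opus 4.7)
The plan is to follow the classical Galerkin approach, using the first $m$ eigenfunctions $w_1,\dots,w_m$ of the Stokes operator $A$ supplied by \eqref{1} to produce approximate solutions, then deriving a $V_1$ estimate that is closed for a short time, and finally passing to the limit. First I would set $u_m(t)=\sum_{j=1}^m g_{jm}(t)w_j$, project \eqref{7} onto $\operatorname{span}\{w_1,\dots,w_m\}$, and obtain an ODE system for $(g_{jm})$ whose right-hand side is locally Lipschitz, giving local-in-time existence on some maximal interval $[0,T_m)$. The initial data $u_0\in V_1$ is approximated by $P_mu_0\to u_0$ in $V_1$.

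The central step is the a priori estimate in $V_1$. I would take the inner product of the Galerkin equation with $Au_m$ and, using the symmetry of $A$ together with \eqref{2}--\eqref{4}, arrive at
\begin{equation*}
\tfrac{1}{2}\tfrac{d}{dt}\|A^{1/2}u_m\|^{2}+\nu\|Au_m\|^{2}
=(f,Au_m)-b(u_m,u_m,Au_m).
\end{equation*}
The forcing term is handled by Cauchy--Schwarz and \eqref{5} with $p=q=2$: $|(f,Au_m)|\le\tfrac{\nu}{4}\|Au_m\|^{2}+C\nu^{-1}\|f\|^{2}$. For the trilinear term I would use the standard 3D estimate (via H\"older, the Sobolev embedding $H^{1}\hookrightarrow L^{6}$, and interpolation)
\begin{equation*}
|b(u_m,u_m,Au_m)|\le C\,\|A^{1/2}u_m\|^{3/2}\|Au_m\|^{3/2},
\end{equation*}
then apply \eqref{5} with $p=4/3$, $q=4$ to absorb $\|Au_m\|^{2}$ into the viscous term and get
\begin{equation*}
\tfrac{d}{dt}\|A^{1/2}u_m\|^{2}+\nu\|Au_m\|^{2}
\le \tfrac{C}{\nu^{3}}\|A^{1/2}u_m\|^{6}+\tfrac{C}{\nu}\|f\|^{2}.
\end{equation*}
Setting $y_m(t)=\|A^{1/2}u_m(t)\|^{2}+1$, this gives a Bernoulli-type differential inequality $y_m'\le C(\nu^{-3}+\nu^{-1}\|f\|^{2})\,y_m^{3}$ whose solution stays bounded on a time interval $[0,T]$ with $T=T(\|u_0\|_1,\nu,\|f\|)>0$ chosen small enough. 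Integrating also yields a uniform bound on $u_m$ in $L^{2}(0,T;V_{2})$.

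Having $(u_m)$ bounded in $L^{\infty}(0,T;V_1)\cap L^{2}(0,T;V_2)$, and noting from the equation that $(\partial_tu_m)$ is bounded in $L^{2}(0,T;V_0)$, I would apply the Aubin--Lions compactness lemma to extract a subsequence converging strongly in $L^{2}(0,T;V_1)$ and weakly-$*$ in $L^{\infty}(0,T;V_1)$; this suffices to pass to the limit in the nonlinear term $B(u_m,u_m)$ and recover a strong solution $u\in L^{\infty}(0,T;V_1)\cap L^{2}(0,T;V_2)$. For uniqueness, given two such solutions $u,v$, I would set $w=u-v$, test the difference equation with $w$, and exploit the cancellation $b(u,w,w)=0$ from \eqref{4} together with an estimate of $b(w,v,w)$ by $\|w\|\|A^{1/2}w\|\|A^{1/2}v\|^{1/2}\|Av\|^{1/2}$; Young's inequality \eqref{5} followed by Gronwall, using $\|Av\|\in L^{2}(0,T)$, closes the argument.

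The main obstacle is the supercritical scaling in three dimensions: the nonlinear estimate only yields $y_m'\lesssim y_m^{3}$ rather than a linear or quadratic inequality, so the bound on $y_m$ cannot be propagated globally and the lifespan $T$ shrinks as $\|u_0\|_1$ grows. Getting the constants and the power on $\|A^{1/2}u_m\|$ exactly right in the interpolation step for $|b(u_m,u_m,Au_m)|$ is where care is needed; everything else is standard manipulation based on \eqref{1}--\eqref{6}.
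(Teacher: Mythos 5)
Your proposal is correct and is essentially the same argument as the one the paper relies on: the paper does not prove this theorem itself but cites the classical Galerkin construction in \cite{13}, and the a priori estimate you derive, $\frac{d}{dt}\Vert A^{1/2}u_{m}\Vert^{2}+\nu\Vert Au_{m}\Vert^{2}\leq C\nu^{-3}\Vert A^{1/2}u_{m}\Vert^{6}+C\nu^{-1}\Vert f\Vert^{2}$, is precisely the differential inequality that integrates to the bound $(\ref{8})$ with $K=2\Vert f\Vert^{2}/\nu+c_{1}/\nu^{3}$ and yields the lifespan $(\ref{10})$. The compactness and uniqueness steps you outline are the standard ones and are carried out correctly.
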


This result was obtained for a type of inequality similar to%
\begin{equation}
\left\Vert u\left(  .,t\right)  \right\Vert _{1}^{2}\leq\frac{1+\left\Vert
u_{0}\right\Vert _{1}^{2}}{\sqrt{1-Kt\left(  1+\left\Vert u_{0}\right\Vert
_{1}\right)  ^{2}}},\label{8}%
\end{equation}
where $K=(2\dfrac{\left\Vert f\right\Vert ^{2}}{\nu}+\dfrac{c_{1}}{\nu^{3}})$.
Hereafter, $c_{i}\in%
\mathbb{N}
$ ,will denote a dimensionless scale invariant positive constant which might
depend on the shape of the domain. The bound in $(\ref{8})$ is only finite
while
\begin{equation}
Kt\left(  1+\left\Vert u_{0}\right\Vert _{1}\right)  ^{2}<1;\label{9}%
\end{equation}
if we choose $T$ satisfying%
\begin{equation}
T<\frac{1}{K\left(  1+\left\Vert u_{0}\right\Vert _{1}^{2}\right)
}.\label{10}%
\end{equation}
The main result of this paper is given in the following theorem.

\begin{theorem}
Assume that $u_{0}\in V_{1}$ and $u$ is the corresponding strong solution to
$(\ref{7})$ on $[0,T]$, then\newline i) If $f\in V_{0}$ and
\begin{equation}
c_{8}T\Vert f\Vert^{2}+c_{9}\left\Vert u\left(  0\right)  \right\Vert
^{2}+\arctan\left\Vert u\left(  0\right)  \right\Vert _{1}^{2}<\frac{\pi}{2},
\label{11}%
\end{equation}
then $u$ exists for each finite time\ $T$ and remains smooth. \newline ii) If
$f\in L^{2}\left(  0,T,V_{0}\right)  $ \ and
\begin{equation}
c_{10}\int_{0}^{T}\Vert f\Vert^{2}ds+c_{11}\left\Vert u\left(  0\right)
\right\Vert ^{2}+\arctan\left\Vert u\left(  0\right)  \right\Vert _{1}%
^{2}+\arctan\left\Vert u\left(  0\right)  \right\Vert _{1}^{2}<\frac{\pi}{2},
\label{12}%
\end{equation}
then $u$ exists globally ($T$ can be $\infty$) and remains smooth.
\end{theorem}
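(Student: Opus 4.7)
The plan is to establish a differential inequality for $y(t)=\|A^{1/2}u(t)\|^{2}$ which, after dividing by $1+y^{2}$, becomes an inequality for $\tfrac{d}{dt}\arctan y$ whose right-hand side is integrable on $[0,T]$ in terms of data controlled a priori by $\|u(0)\|^{2}$ and $\int_{0}^{T}\|f\|^{2}\,ds$. The hypothesis $(\ref{11})$ (respectively $(\ref{12})$) then says exactly that $\arctan y(0)$ plus the resulting integrated estimate stays strictly below $\pi/2$, which forces $y(T)$ to remain finite and hence $u$ to remain smooth on $[0,T]$.

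Concretely, first take the inner product of $(\ref{7})$ with $Au$, use the antisymmetry $(\ref{4})$, apply the standard three-dimensional trilinear bound $|b(u,u,Au)|\leq c\|A^{1/2}u\|^{3/2}\|Au\|^{3/2}$, and use Young's inequality $(\ref{5})$ to absorb $\|Au\|^{2}$ terms from both the nonlinearity and from $(f,Au)$. This produces the familiar differential inequality
\[
\tfrac{d}{dt}y+\nu\|Au\|^{2}\leq \tfrac{c_{*}}{\nu^{3}}y^{3}+\tfrac{c_{\sharp}}{\nu}\|f\|^{2}.
\]
Next, I would divide by $1+y^{2}$ and invoke the elementary identity $y^{3}/(1+y^{2})\leq y$; discarding the nonnegative Stokes term yields
\[
\tfrac{d}{dt}\arctan y(t)\leq \tfrac{c_{*}}{\nu^{3}}y(t)+\tfrac{c_{\sharp}}{\nu}\|f(t)\|^{2}.
\]
Finally, the classical energy identity (estimating $(f,u)$ via Poincar\'e $(\ref{6})$ and Young) gives
\[
\nu\int_{0}^{T}y(s)\,ds\leq \|u(0)\|^{2}+\tfrac{1}{\nu\lambda_{1}}\int_{0}^{T}\|f(s)\|^{2}\,ds,
\]
and inserting this into the time-integrated $\arctan$ inequality reproduces exactly the structure of $(\ref{11})$ and $(\ref{12})$, with the absolute constants $c_{*},c_{\sharp}$ renormalised into the $c_{8},\ldots,c_{11}$ of the statement. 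Once $\arctan y(T)<\pi/2$ the bound $y(T)<\infty$ is immediate, and the standard persistence of regularity argument (bootstrapped from Theorem 3.2) upgrades the $H^{1}$ bound to full smoothness.

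The only genuinely new ingredient is the scalar identity $y^{3}/(1+y^{2})\leq y$, which is what turns the supercritical $y^{3}$ term into a critical $y$ term and so avoids the finite-time bound $(\ref{8})$ that drives Theorem 3.2. Beyond that, the main delicacy is bookkeeping: the constants $c_{8},\ldots,c_{11}$ must come out dimensionless and explicit, so I would track the $\varepsilon$-Young exponents and the trilinear Sobolev constant carefully in order to express them as explicit rational functions of $\nu$ and $\lambda_{1}$. Part (ii) needs no new idea, only the observation that the argument is uniform in $T$: whenever $\int_{0}^{\infty}\|f\|^{2}\,ds$ is small enough that $(\ref{12})$ still holds as $T\to\infty$, the same estimate gives a global $H^{1}$ bound and therefore global smoothness.
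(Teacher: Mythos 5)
Your proposal follows essentially the same route as the paper: test with $Au$, bound the trilinear term by $c\Vert A^{1/2}u\Vert^{3/2}\Vert Au\Vert^{3/2}$ and Young to get $\frac{d}{dt}y\leq c\Vert f\Vert^{2}+cy^{3}$, divide by $1+y^{2}$ using $y^{3}/(1+y^{2})\leq y$ to obtain an $\arctan$ inequality, and close the estimate by controlling $\int_{0}^{T}y\,ds$ with the $L^{2}$ energy estimate before invoking the hypothesis that the total stays below $\pi/2$. This is exactly the paper's argument (including the key scalar inequality you single out), so no further comparison is needed.
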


\begin{proof}
Multiplying $(\ref{7})$ by $\triangle u$, we have
\begin{equation}
\frac{1}{2}\frac{d}{dt}\left\Vert u\left(  .,t\right)  \right\Vert _{1}%
^{2}+\nu\Vert\triangle u\Vert^{2}-\int_{\Omega}\left(  u.\nabla u\right)
.\triangle u\text{ }dx=\left(  f,\triangle u\right)  .\label{13}%
\end{equation}
Using schwartz and Young inequality we get%
\begin{align}
\left\vert \left(  f,\triangle u\right)  \right\vert  &  \leq\left\Vert
f\right\Vert _{L^{2}}\left\Vert \triangle u\right\Vert _{L^{2}}\label{14}\\
&  \leq c_{3}\left\Vert f\right\Vert _{L^{2}}^{2}+\frac{\nu}{2}\left\Vert
\triangle u\right\Vert _{L^{2}}^{2}.\nonumber
\end{align}
For the nonlinear term, we use the H\"{o}lder's inequality%
\begin{equation}%
\begin{array}
[c]{ll}%
\left\vert \int_{\Omega}\left(  u.\nabla u\right)  .\triangle udx\right\vert
& \leq c_{4}\Vert u\Vert_{L^{6}}\left\Vert \nabla u\right\Vert _{L^{3}%
}\left\Vert \triangle u\right\Vert _{L^{2}}\\
& \leq c_{5}\left\Vert \nabla u\right\Vert ^{\frac{3}{2}}\left\Vert \triangle
u\right\Vert ^{\frac{3}{2}}.
\end{array}
\label{15}%
\end{equation}
However, an application of Young's inequality to $(\ref{15})$ yields%
\begin{equation}
\left\vert \int_{\Omega}\left(  u.\nabla u\right)  .\triangle udx\right\vert
\leq c_{6}\left\Vert \nabla u\right\Vert ^{6}+\frac{\nu}{2}\left\Vert
\triangle u\right\Vert ^{2}.\label{16}%
\end{equation}
Combining $(\ref{13})$, $(\ref{14})$ and $(\ref{16})$, we have that
\begin{equation}
\frac{d}{dt}\left\Vert u\left(  .,t\right)  \right\Vert _{1}^{2}\leq
c_{3}\left\Vert f\right\Vert ^{2}+c_{6}\left\Vert u\right\Vert _{1}%
^{6}.\label{17}%
\end{equation}
Suppose first $f\in V_{0}$. Setting $y\left(  t\right)  =\left\Vert u\left(
.,t\right)  \right\Vert _{1}^{2}$ in $(\ref{17})$, this gives%
\begin{equation}
\frac{d}{dt}y\leq c_{3}\Vert f\Vert^{2}+c_{6}y^{3}.\label{18}%
\end{equation}
Dividing $(\ref{18})$ by $1+y^{2}$, we have%
\begin{equation}
\frac{\frac{d}{dt}y}{1+y^{2}}\leq\frac{c_{3}\Vert f\Vert^{2}}{1+y^{2}}%
+c_{6}y\frac{y^{2}}{1+y^{2}}.\label{19}%
\end{equation}
Since $\left\Vert \frac{1}{1+y^{2}}\right\Vert _{\infty}\leq1$ and $\left\Vert
\frac{y^{2}}{1+y^{2}}\right\Vert _{\infty}\leq1$, this yields%
\begin{equation}
\frac{y^{\prime}}{1+y^{2}}\leq c_{3}\Vert f\Vert^{2}+c_{6}y.\label{20}%
\end{equation}
Integrate this over $\left[  0,T\right]  $ to get
\begin{equation}
\arctan y\left(  t\right)  -\arctan y\left(  0\right)  \leq c_{3}T\Vert
f\Vert^{2}+c_{6}\int_{0}^{T}y\left(  s\right)  ds.\label{21}%
\end{equation}
The function $\arctan$ is defined for each $t\in%
\mathbb{R}
$ and $\frac{-\pi}{2}<$ $\arctan t<\frac{\pi}{2}$. Multiplying $(\ref{7})$ by
$u$  to get
\begin{equation}
\frac{1}{2}\frac{d}{dt}\left\Vert u\left(  .,t\right)  \right\Vert ^{2}%
+\nu\Vert u\Vert_{1}^{2}=\left(  f,u\right)  .\label{22}%
\end{equation}
Using Schwartz and Poincare inequality, we have%
\begin{align}
\frac{1}{2}\frac{d}{dt}\left\Vert u\left(  .,t\right)  \right\Vert ^{2}%
+\nu\Vert u\Vert_{1}^{2} &  \leq\left\Vert u\right\Vert \left\Vert
f\right\Vert \label{23}\\
&  \leq\frac{\nu}{2}\left\Vert u\right\Vert _{1}^{2}+c_{7}\left\Vert
f\right\Vert ^{2}.\nonumber
\end{align}
Integrate $(\ref{23})$ over $\left[  0,T\right]  $, to get
\begin{equation}
\left\Vert u\left(  .,t\right)  \right\Vert ^{2}+\frac{\nu}{2}\int_{0}%
^{T}\Vert u\left(  s\right)  \Vert_{1}^{2}ds\leq c_{7}\int_{0}^{T}\left\Vert
f\right\Vert ds+\left\Vert u\left(  0\right)  \right\Vert ^{2}.\label{24}%
\end{equation}
For $f\in V_{0}$, the inequality above gives
\begin{equation}
\arctan y\left(  t\right)  \leq c_{8}T\Vert f\Vert^{2}+c_{9}\left\Vert
u\left(  0\right)  \right\Vert ^{2}+\arctan y\left(  0\right)  .\label{25}%
\end{equation}
The function $\tan y$ is increasing and invertible for $\frac{-\pi}{2}<$
$y<\frac{\pi}{2}$ with inverse function $\arctan t$. Thus, for
\begin{equation}
c_{8}T\Vert f\Vert^{2}+c_{9}\left\Vert u\left(  0\right)  \right\Vert
^{2}+\arctan\left\Vert u\left(  0\right)  \right\Vert _{1}^{2}<\frac{\pi}%
{2}\label{26}%
\end{equation}
we apply the function $\tan$ on $(\ref{25})$ to get%
\begin{equation}
\left\Vert u\left(  t\right)  \right\Vert _{1}^{2}\leq\tan\left(  c_{8}T\Vert
f\Vert^{2}+c_{9}\left\Vert u\left(  0\right)  \right\Vert ^{2}+\arctan
\left\Vert u\left(  0\right)  \right\Vert _{1}^{2}\right)  .\label{27}%
\end{equation}
The assumption $(\ref{26})$ on the initial data guarantees that the right hand
side of $(\ref{27})$ is finite.\newline For $f\in L^{2}\left(  0,T,V_{0}%
\right)  $ and for each $T>0$, integrate $(\ref{20})$ to get%
\begin{equation}
\arctan y\left(  t\right)  -\arctan y\left(  0\right)  \leq c_{3}\int_{0}%
^{T}\Vert f\Vert^{2}ds+c_{6}\int_{0}^{T}y\left(  s\right)  ds.\label{28}%
\end{equation}
Therefore, $(\ref{24})$ implies%
\begin{equation}
\arctan y\left(  t\right)  \leq c_{10}\int_{0}^{T}\Vert f\Vert^{2}%
ds+c_{11}\left\Vert u\left(  0\right)  \right\Vert ^{2}+\arctan y\left(
0\right)  .\label{29}%
\end{equation}
In particular, for%
\begin{equation}
c_{10}\int_{0}^{T}\Vert f\Vert^{2}ds+c_{11}\left\Vert u\left(  0\right)
\right\Vert ^{2}+\arctan\left\Vert u\left(  0\right)  \right\Vert _{1}%
^{2}<\frac{\pi}{2},\label{30}%
\end{equation}
we apply the function $\tan$ on $(\ref{29})$ to get%
\begin{equation}
\left\Vert u\left(  t\right)  \right\Vert _{1}^{2}\leq\tan\left(  c_{10}%
\int_{0}^{T}\Vert f\Vert^{2}ds+c_{11}\left\Vert u\left(  0\right)  \right\Vert
^{2}+\arctan\left\Vert u\left(  0\right)  \right\Vert _{1}^{2}\right)
.\label{31}%
\end{equation}
It follows that the assumption $(\ref{30})$ guarantees that $\left\Vert
u\left(  t\right)  \right\Vert _{1}<\infty$ for all $t>0$.
\end{proof}

Recall that the\ classical regularity result for $f=0$ \cite[Theorem
3.12.]{13} was obtained for a type of inequality similar to%
\begin{equation}
\left\Vert u\left(  .,t\right)  \right\Vert _{1}^{4}\leq\frac{\left\Vert
u_{0}\right\Vert _{1}^{4}}{1-c_{12}2t\left\Vert u_{0}\right\Vert _{1}^{4}%
}.\label{32}%
\end{equation}
It follows that if $\left\Vert u_{0}\right\Vert _{1}$ is finite, then
$\left\Vert u\left(  .,t\right)  \right\Vert _{1}$ is finite, at least for
\begin{equation}
t<\nu^{3}/128\left\Vert u_{0}\right\Vert _{1}^{4}.\label{33}%
\end{equation}
Consequently, we get the following result for the negligible forces.

\begin{corollary}
Assume that $u_{0}\in V_{1}$ and $u$ is the corresponding strong solution to
$(\ref{7})$ on $[0,T]$, then $u$ exists globally and remains smooth\ for all
$T>0$ if
\begin{equation}
c_{11}\left\Vert u\left(  0\right)  \right\Vert ^{2}+\arctan\left\Vert
u\left(  0\right)  \right\Vert _{1}^{2}<\frac{\pi}{2}. \label{34}%
\end{equation}

\end{corollary}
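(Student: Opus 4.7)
The corollary is essentially the forcing-free specialization of part (ii) of Theorem 3.3, so my plan is to rerun the proof of the main theorem with $f\equiv 0$ and observe that the $T$-dependent term $c_{10}\int_{0}^{T}\Vert f\Vert^{2}ds$ in $(\ref{12})$ simply disappears, leaving a bound on $\Vert u(t)\Vert_{1}^{2}$ that is uniform in $T$.

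First I would set $y(t)=\Vert u(.,t)\Vert_{1}^{2}$ and revisit inequality $(\ref{20})$ with $f=0$, which reduces to $y^{\prime}/(1+y^{2})\le c_{6}y$. Integrating over an arbitrary interval $[0,T]$ yields
\[
\arctan y(T)-\arctan y(0)\le c_{6}\int_{0}^{T}y(s)\,ds.
\]
Next I would handle the $L^{2}$ energy estimate $(\ref{22})$--$(\ref{24})$ with $f=0$: the right-hand side of $(\ref{22})$ vanishes, so the standard energy identity gives $\int_{0}^{T}\Vert u(s)\Vert_{1}^{2}ds\le \frac{1}{2\nu}\Vert u(0)\Vert^{2}$, a bound independent of $T$. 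Substituting this into the $\arctan$ inequality produces
\[
\arctan y(T)\le c_{11}\Vert u(0)\Vert^{2}+\arctan y(0),
\]
for an appropriate constant $c_{11}$ absorbing $c_{6}/(2\nu)$, and crucially the right-hand side does not depend on $T$.

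The final step is to invoke hypothesis $(\ref{34})$, which says exactly that this $T$-independent right-hand side is strictly less than $\pi/2$. Since $\tan$ is increasing on $(-\pi/2,\pi/2)$, applying it to both sides gives a finite upper bound for $\Vert u(T)\Vert_{1}^{2}$ valid for every $T>0$, which by the standard blow-up criterion forces the strong solution to exist globally and remain smooth.

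I do not anticipate any real obstacle: the only care required is bookkeeping the constants so that the $c_{11}$ appearing in $(\ref{34})$ is consistent with the one produced by the energy estimate, and noting explicitly that the absence of forcing is what removes the $T$-dependence from the bound, so that the smallness assumption needs to be imposed only on the initial data rather than on the pair $(u_{0},f)$ for each $T$.
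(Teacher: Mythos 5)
Your proposal is correct and follows essentially the same route as the paper: divide the differential inequality for $y=\Vert u\Vert_{1}^{2}$ by $1+y^{2}$, integrate to get an $\arctan$ bound, control $\int_{0}^{T}y\,ds$ by the forcing-free energy estimate $(\ref{24})$, and apply $\tan$ under hypothesis $(\ref{34})$. The only cosmetic difference is that you specialize $(\ref{20})$ to $f=0$ while the paper rederives it via $(\ref{35})$--$(\ref{37})$; the substance is identical.
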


\begin{proof}
From $(\ref{17})$ we get for $f=0$
\begin{equation}
\frac{d}{dt}\left\Vert u\right\Vert _{1}^{2}\leq c_{6}\left\Vert u\right\Vert
_{1}^{6}. \label{35}%
\end{equation}
In particular, we have%
\begin{equation}
\frac{d}{dt}y\leq c_{6}y\left(  1+y^{2}\right)  \text{ with }y\left(
t\right)  =\left\Vert u\left(  .,t\right)  \right\Vert _{1}^{2}. \label{36}%
\end{equation}
Dividing $(\ref{36})$ by $1+y^{2}$ yields%
\begin{equation}
\frac{\frac{d}{dt}y}{1+y^{2}}\leq c_{6}y. \label{37}%
\end{equation}
Integrate this over $\left[  0,T\right]  $ to get%
\begin{equation}
\arctan y\left(  t\right)  \leq c_{6}\int_{0}^{T}y\left(  s\right)  ds+\arctan
y\left(  0\right)  . \label{38}%
\end{equation}
From $(\ref{24})$, we find%
\begin{equation}
\int_{0}^{T}y\left(  s\right)  ds\leq c_{9}\left\Vert u\left(  0\right)
\right\Vert ^{2} \label{39}%
\end{equation}
this implies that $(\ref{38})$ is equivalent to%
\begin{equation}
\arctan y\left(  t\right)  \leq c_{11}\left\Vert u\left(  0\right)
\right\Vert ^{2}+\arctan y\left(  0\right)  . \label{40}%
\end{equation}
Now, applying the function $\tan$ on $(\ref{40})$ to get%
\begin{equation}
\left\Vert u\left(  t\right)  \right\Vert _{1}^{2}\leq\tan\left(
c_{11}\left\Vert u\left(  0\right)  \right\Vert ^{2}+\arctan\left\Vert
u\left(  0\right)  \right\Vert _{1}^{2}\right)  , \label{41}%
\end{equation}
which is finite thanks to the following assumption%
\begin{equation}
c_{11}\left\Vert u\left(  0\right)  \right\Vert ^{2}+\arctan\left\Vert
u\left(  0\right)  \right\Vert _{1}^{2}<\frac{\pi}{2} \label{42}%
\end{equation}
and this concludes the proof.
\end{proof}

Since the condition $(\ref{42})$ is independent of time we get global
estimates for $\left\Vert u\left(  .,t\right)  \right\Vert _{1}$ by this
method. An important consequence of this result is that for each finite time
$T^{\ast}$ such that
\begin{equation}
T^{\ast}<\nu^{3}/128\left\Vert u_{0}\right\Vert _{1}^{4}, \label{43}%
\end{equation}
there is a $u_{0}$ satisfies
\begin{equation}
c_{11}\left\Vert u\left(  0\right)  \right\Vert ^{2}+\arctan\sqrt{\nu
^{3}/128T^{\ast}}<\frac{\pi}{2}. \label{44}%
\end{equation}
Thus the solution associate to $u_{0}$ satisfies $(\ref{43})$ has a\ global
regularity. But for the same value of $\left\Vert u\left(  0\right)
\right\Vert _{1}$ occurs a blow up in finite time $T^{\ast}$ by the usual
method $(\ref{32})$. This property follows easily when $\left\Vert u\left(
0\right)  \right\Vert $ approaches zero.

This result gives a simple condition for global regularity and extends the
known corresponding result $(\ref{32})$, where a blow-up criterion in finite
time $T$ depend on $u_{0}$ for negligible forces, see \cite{3, 5, 11, 13}.

\end{document}